%%%%%%%%%%%%%%%%%%%%%%%%%%%%%%%%%%%%%%%%%%%%%%%%%%%%%%%%%%%%%%
\documentclass[11pt,reqno]{amsart}

\usepackage[utf8x]{inputenc}
\usepackage{fullpage}
\usepackage[all]{xy}
\usepackage{amssymb, amsthm}
\usepackage{fourier}

\newcommand{\rk}{\mathrm{rk}}
\newcommand{\scal}[1]{\langle #1 \rangle}
\newcommand{\fg}{\mathfrak{g}}
\newcommand{\fp}{\mathfrak{p}}
\newcommand{\Spec}{\mathop{Spec}}

  \def\C{\mathbb{C}}
\def\Z{\mathbb{Z}}

\def\G{{\mathbb{G}}}
\def\tG{{\widetilde{G}}}

\DeclareMathOperator{\car}{char}
\DeclareMathOperator{\Iso}{{\mathcal{I}\!\!\textit{so}}}

\newtheorem{proposition}{Proposition}[section]
\newtheorem{lemma}[proposition]{Lemma}

\theoremstyle{definition}

\theoremstyle{remark}

\theoremstyle{plain} 
\newtheorem*{assumption}{Working assumption}
\newtheorem*{Mtheorem}{Main Theorem}

\begin{document}

\title[Stability of the tangent bundle of $G/P$]{Stability of the tangent bundle of 
$G/P$ in positive characteristics}

\author[I. Biswas]{Indranil Biswas}

\address{School of Mathematics, Tata Institute of Fundamental
Research, Homi Bhabha Road, Bombay 400005, India}

\email{indranil@math.tifr.res.in}

\author[P.-E. Chaput]{Pierre-Emmanuel Chaput}

\address{Domaine Scientifique Victor Grignard, 239, Boulevard des
Aiguillettes, Universit{\'e} Henri Poincar{\'e} Nancy 1, B.P. 70239,
F-54506 Vandoeuvre-l{\`e}s-Nancy Cedex, France}

\email{pierre-emmanuel.chaput@univ-lorraine.fr}

\author[Ch. Mourougane]{Christophe Mourougane}

\address{D{\'e}partement de Math{\'e}matiques,
Campus de Beaulieu, b\^{a}t. 22-23, Universit\'{e} de Rennes 1,
263 avenue du G\'{e}n\'{e}ral Leclerc, CS 74205, 35042 RENNES C\'{e}dex,
France}

\email{christophe.mourougane@univ-rennes1.fr}

\subjclass[2010]{14M17, 14G17, 14J60}

\keywords{Rational homogeneous space, tangent bundle, stability, Frobenius}

\date{}

\begin{abstract}
Let $G$ be an almost simple simply-connected affine algebraic group over an algebraically 
closed field $k$ of characteristic $p\, >\, 0$. If $G$ has type $B_n$, $C_n$ or 
$F_4$, we assume that $p\,>\,2$, and if $G$ has type $G_2$, we assume that $p\,>\,3$. Let
$P\, \subset\, G$ be a parabolic subgroup. We prove that the tangent bundle of $G/P$ is 
Frobenius stable with respect to the anticanonical polarization on $G/P$.
\end{abstract}

\maketitle

\section{Introduction}

Let $G$ be an almost simple simply-connected affine algebraic group over an algebraically
closed field~$k$, and let $P\, \subset\, G$ be a parabolic subgroup. If the characteristic
$\car(k)$ is zero, then it is known that the tangent bundle of $G/P$ is stable with
respect to the anticanonical polarization on $G/P$. Throughout, (semi)stability means
slope (semi)stability. In the complex case
it was proved long ago that this bundle admits a K\"{a}hler-Einstein
metric (see \cite{koszul} or \cite[Chapter 8]{besse}),
which implies polystability. Simplicity of this bundle was proved in
\cite{AB}, proving the stability; A. Boralevi proved stability of $T(G/P)$
when $G$ is of type ADE \cite[Theorem C]{Bo}. Our aim here is to address stability of
$T(G/P)$ in the case where $\car(k)$ is positive.

If $G$ is of type $B_n$, $C_n$ or $F_4$, we assume that $\car(k)\,>\,2$; if $G$ is of type $G_2$, we 
assume that $\car(k)\,>\,3$.

The main Theorem of this note says that under the above assumption, the tangent bundle of 
$G/P$ and all its iterated Frobenius pull-backs are stable with respect to the 
anticanonical polarization on $G/P$.

The method of proof of the main Theorem is as follows. We prove that the
stability of $T(G/P)$ is equivalent to certain statement on the quotient
$\text{Lie}(G)/\text{Lie}(P)$ considered as a $P$--module. The statement in question
is shown to be independent of the characteristic of $k$ (as long as the above assumptions
hold). Finally, the main Theorem follows from the fact that $T(G/P)$ is stable
if $\car(k)\, =\, 0$.

A natural question to ask is whether $T(G/P)$ remains stable with respect to
polarizations on $G/P$ other than the anticanonical one. A. Boralevi gave a negative
answer to this question. She constructed examples of $G/P$ and polarization on them with
respect to which $T(G/P)$ is not even semistable \cite[Theorem D]{Bo}.

\section{Tangent bundle of $G/P$}

Let $G$ be an almost simple simply-connected affine algebraic group defined over an 
algebraically closed field $k$. The Lie algebra of $G$ will be denoted by $\fg$. Let 
$P\, \subsetneq\, G$ be a parabolic subgroup. We start with a result which is valid in 
all characteristics.

\begin{proposition}
 \label{prop:tensor-product}
Let $M_1,M_2$ be two $G$-modules such that $H^0(G/P, \, T(G/P))\,=\, M_1 \otimes M_2$
as $G$-modules. Then either $M_1\,=\,k$ or $M_2\,=\,k$.
\end{proposition}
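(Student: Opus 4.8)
The plan is to identify the $G$--module $H^0(G/P,\,T(G/P))$ with a Lie algebra and then to show that a simple Lie algebra cannot be a nontrivial tensor product. By Demazure's description of the automorphism group of a rational homogeneous space (valid in all characteristics), the connected automorphism group $\tG:=\mathrm{Aut}^0(G/P)$ is an almost simple algebraic group having $G/P$ as one of its flag varieties, and the assignment of a fundamental vector field to an element of $\fg$ realizes the inclusion $\fg\hookrightarrow\widetilde{\fg}:=\mathrm{Lie}(\tG)$ inside a $G$--equivariant identification $H^0(G/P,\,T(G/P))\cong\widetilde{\fg}$, where $G$ acts through the homomorphism $G\to\tG$ and the adjoint representation. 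In particular $G$ acts by Lie algebra automorphisms, so the bracket $\widetilde{\fg}\otimes\widetilde{\fg}\to\widetilde{\fg}$ is a $G$--module map. Under the stated hypotheses on $\car(k)$ the Killing form is nondegenerate and the adjoint representation is irreducible, so $\widetilde{\fg}$ is a simple Lie algebra: it has trivial centre and $[\widetilde{\fg},\widetilde{\fg}]=\widetilde{\fg}$.

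Assume, for contradiction, that $\widetilde{\fg}\cong M_1\otimes M_2$ as $G$--modules with $\dim M_1,\dim M_2\geq 2$. Fix a maximal torus $\widetilde T\subset\tG$ and let $\theta$ be the highest root. The extreme weights of the adjoint representation, namely the vertices of its weight polytope, are exactly the long roots, and these form a single orbit under the Weyl group. The weight polytope of a tensor product is the Minkowski sum of the weight polytopes of the factors, and every vertex of a Minkowski sum is \emph{uniquely} a sum of a vertex of each summand. Applied to the highest weight this forces $\theta=\lambda_1+\lambda_2$, where $\lambda_i$ is the highest weight of $M_i$; since both factors are nontrivial, $\lambda_1$ and $\lambda_2$ are nonzero dominant weights.

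When $G\to\tG$ is surjective, $\widetilde{\fg}$ is $G$--irreducible, hence so are $M_1$ and $M_2$, and the contradiction is the classical fact that over a field of characteristic zero a tensor product of two nontrivial irreducible representations of a semisimple group is never irreducible: it strictly contains its Cartan component $L(\lambda_1+\lambda_2)=\widetilde{\fg}$, so $\dim M_1\cdot\dim M_2>\dim\widetilde{\fg}$. I expect the main obstacle to be twofold. First, for the short, explicitly classified list of pairs $(G,P)$ with $G\to\tG$ not surjective, $\widetilde{\fg}$ need not be $G$--irreducible and the $M_i$ need not be irreducible, so the weights must be read off over the larger torus $\widetilde T$; here I would argue that the $\tG$--weight polytope of $\widetilde{\fg}$ admits no Minkowski decomposition compatible with a $G$--tensor factorization, dispatching the remaining exceptions by hand. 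Second, in positive characteristic complete reducibility fails and, by Steinberg's theorem, tensor products of nontrivial irreducibles can be irreducible; ruling this out amounts to controlling the $p$--adic expansion of $\theta$ and is exactly where the hypotheses on $p$ for types $B_n$, $C_n$, $F_4$ and $G_2$ are needed. Establishing the strict dimension inequality uniformly in $\car(k)$ is thus the technical heart of the argument.
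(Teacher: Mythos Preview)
Your proposal is not a complete proof: you explicitly flag the two main obstacles (the exceptional cases where $G\to\tG$ is not surjective, and the positive-characteristic failure of complete reducibility together with Steinberg-type irreducible tensor products) and then say these must be ``dispatched by hand'' or constitute ``the technical heart of the argument'' without carrying this out. As written, the argument only treats the case where $\widetilde{\fg}$ is $G$-irreducible and $\car(k)=0$. There is also a structural problem in your handling of the exceptional cases: you propose to read weights of $M_1$ and $M_2$ over the larger torus $\widetilde T\subset\tG$, but $M_1,M_2$ are given only as $G$-modules, so they carry no $\widetilde T$-weight decomposition a priori.

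More importantly, you have misidentified where the characteristic hypotheses enter. In the paper this proposition precedes the working assumption and is stated as valid in \emph{all} characteristics; the restrictions on $p$ in types $B_n,C_n,F_4,G_2$ are used only later. Your reliance on nondegeneracy of the Killing form, irreducibility of the adjoint representation, and Steinberg's theorem imports exactly the kind of characteristic-dependent machinery that the statement does not need. The paper's argument avoids all of this by working purely with weights: one checks (using Demazure's short list of exceptions) that the highest root $\theta$ of $\fg$ is a \emph{maximal} weight of $H^0(G/P,T(G/P))$, hence $\theta=\omega_1+\omega_2$ with $\omega_i$ a maximal (so dominant) weight of $M_i$. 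Outside types $A_n$ and $C_n$ the highest root is already a fundamental weight, forcing one $\omega_i=0$; in types $A_n$ and $C_n$ one reads off $\omega_1,\omega_2$ explicitly and bounds $\dim M_i$ from below by the size of the Weyl orbit of $\omega_i$, obtaining a dimension contradiction. This uses no irreducibility, no Lie-algebra structure on $H^0$, and no hypothesis on $p$.
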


\begin{proof}
 Let $\theta$ be the highest root of $\fg$. We claim that $\theta$ is a maximal weight of
$H^0(G/P,\, T(G/P))$ in the sense that $\theta + \alpha$ is not a weight of $H^0(G/P,\,T(G/P))$
for any positive root $\alpha$. To prove this, first note that if $H^0(G/P,\,T(G/P))\,=\,\fg$,
then this is in fact the definition of the highest root. By \cite[Th\'eor\`eme 1]{demazure},
there are only three cases
where $H^0(G/P,\,T(G/P)) \,\neq\, \fg$:
\begin{enumerate}
\item $G\,=\, \text{Sp}(2n)$ of type $C_n$ with $H^0(G/P,\,T(G/P))\,=\, \mathfrak{sl}(2n)$,

\item $G\,=\,\text{SO}(n+2)$ of type $B_n$ with $H^0(G/P,\,T(G/P))\,=\, \mathfrak{so}(2n+2)$, and

\item $G\,=\,G_2$ with $H^0(G/P,\,T(G/P))\,=\, \mathfrak{so}(7)$.
\end{enumerate}
In these three cases, we have exceptional automorphisms that account for additional vector fields
and we have $H^0(G/P,\, T(G/P))\,=\,\fg \oplus V$, where $V$ has a
unique highest weight which is not higher than $\theta$. For example, if $G\,=\,
\text{Sp}_{2n}$, then $G/P\,=\,\text{SL}(2n)/P_{\text{SL}(2n)}$ is a projective space of dimension $2n-1$, so that
 $H^0(G/P,\, T(G/P))$ is $\mathfrak{sl}(2n)$. Then $V$ is a module with unique highest
weight $\epsilon_1+\epsilon_2$, whereas $\theta\,=\,2\epsilon_1$ (in the notation of
\cite[Chap VI, Planches]{bourbaki}). So the claim is proved.

As $\theta$ is a maximal weight of $H^0(G/P,\, T(G/P))\,=\, M_1 \otimes M_2$,
there are maximal weights $\omega_1$ and $\omega_2$ of $M_1$ and $M_2$ respectively,
such that
 \begin{equation}
 \label{equa:theta}
 \theta \,= \,\omega_1 + \omega_2\, .
 \end{equation}
Since $\omega_1$ and $\omega_2$ are maximal, they are dominant. In all types except $A_n$
and $C_n$, we have $\theta$ to be a fundamental weight. Therefore, from the equality in
(\ref{equa:theta}) it follows that either $\omega_1\,=\,0$ or $\omega_2\,=\,0$, hence the
proposition is proved in these cases.

For the remaining cases of $A_n$ and $C_n$, assume that $\omega_1 \,\neq\, 0$ and
$\omega_2 \,\neq\, 0$. Let $\varpi_i$ denote the $i$-th fundamental weight.
In case of $A_n$, we have $\theta\,=\,\varpi_1 + \varpi_n$, so
up to a permutation, $\omega_1\,=\,\varpi_1$ and $\omega_2\,=\,\varpi_n$.
Since the Weyl group orbits of both $\varpi_1$ and $\varpi_n$ have $n+1$ elements,
it follows that $\dim M_1 \geq n+1$ and $\dim M_2 \geq n+1$. This implies
that $\dim H^0(G/P,\,T(G/P)) \,\geq\, (n+1)^2$ which is a contradiction. In
case of $C_n$, we have $\theta \,= \,2 \varpi_1$, so similarly we get
$\omega_1\,=\,\omega_2\,=\,\varpi_1$, and $\dim H^0(G/P,\,T(G/P)) \, \geq \, (2n)^2$.
This is again a contradiction.
\end{proof}

\section{The main result}

We now impose the following assumptions on the characteristic of $k$:

\begin{assumption}
\mbox{}
\begin{itemize}
\item The characteristic $\car(k)$ of $k$ is positive, and 

\item $\car(k)$ is bigger than all the coefficients
 $\scal { \alpha^\vee\, , \beta } $ for all roots 
 $\alpha\, ,\beta$ of $G$ with $\alpha \,\neq\, \beta$.
\end{itemize}
\end{assumption}

In other words, if the root system of $G$ is simply-laced, then $\car(k)$ is only
assumed to be positive; if $G$ is any of $B_n$, $C_n$ and $F_4$, we assume that
$\car(k)\,>\,2$; if $G\,=\, G_2$, we assume that $\car(k)\,>\,3$.

\begin{Mtheorem}
Under the previous assumption, the tangent bundle $T(G/P)$ is Frobenius stable with respect
to the anticanonical polarization on $G/P$.
\end{Mtheorem}

We will divide the proof into several steps. The question of stability will be reduced to
characteristic zero. The reduction to characteristic zero is
achieved using the following construction: Let $G_\Z$ be the split simply-connected
Chevalley group scheme over $\Z$ having the same root system as $G$. By the theory of
reductive algebraic group schemes,
as the root system characterizes simply-connected groups up to isomorphism,
we have $G \,\simeq\, G_\Z \otimes \Spec k$. On the
other hand, we denote $G_\Z \otimes \Spec \C$ by
$G_\C$. There exists a parabolic group $P_\Z \,\subset\, G_\Z$ such that
$P_\Z \otimes \Spec k$ is conjugate to $P$. The parabolic subgroup
$P \otimes \Spec \C$ of $G_\C$ will be denoted by $P_\C$.

Fix a maximal torus $T\,\subset\, G$
and a Borel subgroup $B$. Assume $T \subset B \subset P$.
Let $R$ denote the set of roots of $\fg$.
The set of positive (respectively, negative) roots of $\fg$ will be denoted by
$R^+$ (respectively, $R^-$). The eigenspace corresponding to any
$\alpha \,\in\, R$ will be denoted by $\fg^\alpha$.

A subsheaf $E \,\subset\, T(G/P)$ is called \emph{$G$-stable}
if it is preserved by the left action of $G$ on $T(G/P)$. Since the left translation
action of $G$ on $G/P$ is transitive, any $G$-stable subsheaf of $T(G/P)$ is a subbundle.

The anticanonical line bundles of $G/P$ and $G_\C/P_\C$ are ample.
Fix the anticanonical polarization on $G/P$ and also on $G_\C/P_\C$.

\begin{proposition}
 \label{prop:p=0}
Let $E \,\subset\, T(G/P)$ be a $G$-stable subbundle of $T(G/P)$. There
exists a subbundle $E_\C \,\subset\, T(G_\C/P_\C)$ such
that $\rk(E_\C)\,=\,\rk(E)$ and $\deg(E_\C)\,=\,\deg(E)$.
\end{proposition}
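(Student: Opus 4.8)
The plan is to exploit the fact that $G$-stable subbundles of $T(G/P)$ are entirely governed by $P$-module data, and that this data is the same over $k$ and over $\C$. Since $G$ acts transitively on $G/P$, a $G$-stable subsheaf is determined by its fiber over the base point $eP$, which is a $P$-submodule of the tangent space $T_{eP}(G/P) \simeq \fg/\fp$. Concretely, I would set up the equivalence of categories between $G$-equivariant subbundles of $T(G/P)$ and $P$-submodules of $\fg/\fp$, sending $E$ to its fiber $E_{eP} \subset \fg/\fp$. The point of the Chevalley group scheme setup in the excerpt is precisely that $\fg/\fp$ is defined over $\Z$: the weight decomposition of $\fg/\fp$ as a $P$-module (equivalently, the set of roots $R^- \setminus R_P^-$ whose root spaces span $\fg/\fp$) does not depend on the characteristic.

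First I would pass from the $G$-stable subbundle $E$ over $k$ to its fiber, a $P$-submodule $V \subset \fg/\fp$. Under the working assumption on $\car(k)$, the $P$-module structure on $\fg/\fp$ is "the same" as the one over $\C$: I would argue that the relevant $P_\C$-submodule $V_\C \subset \fg_\C/\fp_\C$ can be chosen to have the same set of $T$-weights as $V$. This is where the characteristic assumption is used—one needs that the $P$-submodule lattice of $\fg/\fp$ is insensitive to $p$, which is guaranteed once $p$ is larger than all the structure constants $\scal{\alpha^\vee,\beta}$ appearing in the action of the unipotent radical. Then I would reconstruct from $V_\C$ the corresponding $G_\C$-equivariant subbundle $E_\C \subset T(G_\C/P_\C)$.

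With $E$ and $E_\C$ matched through subbundles with the same weight data, the numerical invariants follow formally. The rank is immediate: $\rk(E) = \dim_k V = \dim_\C V_\C = \rk(E_\C)$, since both equal the number of weights (counted with multiplicity) in the common weight set. For the degree, I would use that the degree of a $G$-equivariant subbundle with respect to the anticanonical polarization is computed from its fiber weights via the same universal formula in both settings: $\deg(E)$ is a fixed $\Z$-linear (or intersection-theoretic) expression in the weights of $V$ against the anticanonical class, and likewise for $E_\C$. Because the weight sets agree and the anticanonical class is the class $2\rho$ (sum of the positive roots not in $P$) in either characteristic, the two degrees coincide.

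The main obstacle I expect is the second step: verifying rigorously that the $P$-submodule lattice of $\fg/\fp$ is preserved under reduction modulo $p$, i.e.\ that every $P$-submodule over $k$ lifts to a $P_\Z$-submodule (equivalently, to one over $\C$) with identical weights. In characteristic $p$ the $P$-action via the unipotent radical involves divided-power or exponential operators whose coefficients are exactly the integers $\scal{\alpha^\vee,\beta}$; if $p$ divides one of these, a submodule can appear in characteristic $p$ that has no characteristic-zero analogue, which would break the weight-set matching. The working assumption is designed precisely to exclude this, so the crux of the argument is to confirm that under $p > \scal{\alpha^\vee,\beta}$ for all $\alpha \neq \beta$, the $P$-stable subspaces are cut out by the same weight-combinatorial conditions over $k$ as over $\C$, making the passage $V \mapsto V_\C$ well defined and weight-preserving.
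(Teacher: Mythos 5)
Your proposal follows essentially the same route as the paper: pass from the $G$-stable subbundle $E$ to its fiber at the base point, a $P$-submodule of $\fg/\fp$; use $T$-stability to write it as a sum of root spaces; invoke the characteristic assumption to see that $P$-stability of such a sum is characterized by the same root-combinatorial condition over $k$ as over $\C$ (the step you correctly flag as the crux is exactly the paper's Lemma~\ref{lemm:caracterisation}, proved there via Chevalley-basis structure constants and \cite[Theorem 4.2.1]{carter}); then rebuild $E_\C$ from the matching weight set, the rank and degree agreeing because both are computed from that common set. Your identification of the failure mode when $p$ divides a structure constant, and of why the working assumption excludes it, matches the paper's reasoning, so the proposal is correct and essentially identical in approach.
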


\begin{proof}
Let $x_0\,=\,eP/P \,\in \,G/P$ be the base point. The set of roots $\alpha$
such that $\fg^\alpha \,\subset\, \fp$ will be denoted by $I(P)$. We have
$$
T_{x_0}(G/P) \,\simeq\, \fg / \fp \,\simeq \,\bigoplus_{\alpha \in R \setminus I(P)} \fg^\alpha\, .
$$

Sending a $G$-stable subbundle $V\, \subset\, T(G/P)$ to the $P$-module $V_{x_0}$
an equivalence between $G$-stable subbundles of $T(G/P)$ and
$P$-submodules of $T_{x_0}(G/P)$ is obtained. Let $M$ be the $P$-submodule of
$T_{x_0}(G/P)$ corresponding to $E$.
Since $M$ is a $T$-stable subspace of $\bigoplus_{\alpha \not \in I(P)} \fg^\alpha$,
there is a subset $I(M)\subset R \setminus I(P)$ such that $M \,= \,\bigoplus_{\alpha \in I(M)} \fg^\alpha$. 
By the following Lemma \ref{lemm:caracterisation}, we have
$$\forall~ \beta \,\in\, I(P)\, ,\ \ \forall ~\alpha \,\in\, I(M)\, , \ \
\alpha + \beta \,\in\, R \setminus I(P) \, \Longrightarrow \, \alpha + \beta \,\in\, I(M)\, .
$$
Thus, $M_\C \,:=\,\bigoplus_{\alpha \in I(M)} \fg^\alpha_\C$ is a $P_\C$-submodule of $T_{x_0}(G_\C/P_\C)$
and the subbundle $E_\C\,\subset\, T(G_\C/P_\C)$ 
corresponding to $M_\C $ satisfies the conditions in the proposition.
\end{proof}

In the following Lemma, we consider the vector space $\bigoplus_{\alpha\in R\setminus I(P)} \fg^\alpha$.
This is isomorphic as a vector space to $\fg/\fp$, and therefore has a natural $P$-module
structure.

\begin{lemma}
 \label{lemm:caracterisation}
 Let $I \,\subset\, R \setminus I(P)$ be a set of negative roots.
 Then the sum $M(I)\,:=\,\bigoplus_{\alpha \in I} \fg^\alpha$ is a $P$-stable
 submodule of $\bigoplus_{\alpha \in R\setminus I(P)} \fg^\alpha$ if, and
 only if,
\begin{equation}\label{e1}
\forall~ \beta \,\in\, I(P)\, ,\ \ \forall ~\alpha \,\in\, I\, , \ \
\alpha + \beta \,\in\, R \setminus I(P) \, \Longrightarrow \, \alpha + \beta \,\in\, I\, .
\end{equation}
\end{lemma}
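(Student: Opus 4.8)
The plan is to prove the equivalence by translating the condition that $M(I)$ is $P$-stable into a purely combinatorial statement about roots, using the fact that $P$ is generated by the torus $T$ together with the root subgroups (equivalently, the action of $\fp$ on $\fg/\fp$ is governed by the bracket and the adjoint action). Since $I$ is a $T$-stable collection of root spaces, the subspace $M(I)$ is automatically $T$-stable, so the only thing to check is stability under the Lie-algebra action of the positive root spaces $\fg^\beta$ with $\beta \in I(P)$. Thus I would reduce the statement to: $M(I)$ is $\fp$-stable inside $\fg/\fp$ if and only if for every $\beta \in I(P)$ and every $\alpha \in I$, whenever $\alpha + \beta$ is again a root lying in $R \setminus I(P)$ (so that it contributes a nonzero class in $\fg/\fp$), then $\alpha + \beta \in I$.

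\medskip
First I would recall the Chevalley commutator/bracket formula: for roots $\alpha, \beta$ with $\alpha + \beta \neq 0$, one has $[\fg^\beta, \fg^\alpha] \subseteq \fg^{\alpha+\beta}$, with the bracket being nonzero precisely when $\alpha + \beta$ is a root, provided the relevant structure constant $N_{\beta,\alpha}$ is nonzero. Acting by $\fg^\beta$ on a generator $X_\alpha \in \fg^\alpha \subset M(I)$ produces an element in $\fg^{\alpha+\beta}$ modulo $\fp$. If $\alpha + \beta \notin R$ or $\alpha + \beta \in I(P)$, the image in $\fg/\fp$ is zero and imposes no constraint; if $\alpha + \beta \in R \setminus I(P)$, then $\fg^\beta \cdot X_\alpha$ has a nonzero component along $\fg^{\alpha+\beta}$, and for $M(I)$ to be stable this component must lie in $M(I)$, i.e. $\alpha + \beta \in I$. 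This gives the implication in both directions once I know the structure constant is nonzero.

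\medskip
The main obstacle is precisely the nonvanishing of the structure constants $N_{\beta,\alpha}$ in characteristic $p$. Over $\C$ the constant is nonzero whenever $\alpha + \beta$ is a root, but in positive characteristic $N_{\beta,\alpha}$ could vanish modulo $p$, which would break the ``only if'' direction of the argument (a zero bracket would impose no constraint even though $\alpha+\beta$ is a root). Here is where the Working assumption enters: the magnitude of $N_{\beta,\alpha}$ is controlled by the coefficients $\langle \alpha^\vee, \beta\rangle$ (more precisely, by the length of the $\alpha$-string through $\beta$, which is bounded by these Cartan pairings), and the hypothesis that $\car(k)$ exceeds all such coefficients guarantees $N_{\beta,\alpha} \not\equiv 0 \pmod{p}$. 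I would therefore carry out the argument as follows: fix $\beta \in I(P)$ and $\alpha \in I$ with $\alpha + \beta \in R \setminus I(P)$, invoke the assumption to conclude the relevant structure constant is a nonzero scalar in $k$, and deduce that $P$-stability forces $\alpha + \beta \in I$; conversely, if the combinatorial condition \eqref{e1} holds then every such bracket lands in $M(I)$ and, together with $T$-stability, this shows $M(I)$ is $\fp$-stable, hence $P$-stable since $P$ is connected and generated by $T$ and the root subgroups.
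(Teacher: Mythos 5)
Your ``only if'' direction is essentially the paper's: differentiate the $P$-action to get $\fp$-stability, choose a Chevalley basis, and use the Working assumption to guarantee that the structure constant $N_{\beta,\alpha}=\pm(q+1)$ (where $q$ is the largest integer with $\alpha-q\beta\in R$, bounded via root-string lengths) does not vanish in $k$, so that $[\fg^{\beta},\fg^{\alpha}]=\fg^{\alpha+\beta}$ and stability forces $\alpha+\beta\in I$. The gap is in your ``if'' direction, at the step ``$M(I)$ is $\fp$-stable, hence $P$-stable since $P$ is connected and generated by $T$ and the root subgroups.'' This inference is valid in characteristic $0$ but is a false principle in characteristic $p$: for a connected group over a field of positive characteristic, a subspace stable under the Lie algebra need not be stable under the group. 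Concretely, the adjoint action of the root subgroup $U_\beta$ involves divided powers: $\mathrm{Ad}(u_\beta(t))X_\alpha=\sum_{k\geq 0}t^k M_{\beta,\alpha,k}X_{\alpha+k\beta}$ with $M_{\beta,\alpha,k}=\pm\binom{q+k}{k}$, and these coefficients are reductions of $(\mathrm{ad}\,X_\beta)^k/k!$ computed over $\Z$; they can be nonzero in $k$ even when the $k$-fold iterated bracket vanishes. For instance, in characteristic $2$, for a root string $\alpha,\ \alpha+\beta,\ \alpha+2\beta$, the span of $\fg^{\alpha}$ and $\fg^{\alpha+\beta}$ is $\mathrm{ad}(\fg^{\beta})$-stable because $[X_\beta,X_{\alpha+\beta}]=\pm 2X_{\alpha+2\beta}=0$, yet it is not $U_\beta$-stable because $M_{\beta,\alpha,2}=\pm\binom{2}{2}=\pm 1$. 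So single-bracket stability simply does not give $U_\beta\cdot M(I)\subset M(I)$, and your proof of the direction that actually matters for Proposition \ref{prop:p=0} is unjustified.

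The paper avoids this by working with the group action directly: $U_\beta\cdot\fg^{\alpha}\subset\bigoplus_{k\geq 0}\fg^{\alpha+k\beta}$, so one must show that \emph{every} $\alpha+k\beta$ with $k\geq 1$ lying in $R\setminus I(P)$ belongs to $I$, not merely $\alpha+\beta$. This follows from \eqref{e1} by induction along the (unbroken) $\beta$-string, using that $I(P)$ is closed under addition of roots: if $\alpha+(k-1)\beta\in I(P)$ then $\alpha+k\beta\in I(P)$ whenever it is a root, and if $\alpha+(k-1)\beta\in I$ one applies \eqref{e1} again. Your proposal never confronts these higher terms $k\geq 2$, because in the Lie-algebra picture only $k=1$ appears --- and that is precisely where the characteristic-$p$ subtlety lives. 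To repair your argument, replace the appeal to connectedness by this explicit computation of the $U_\beta$-action (the Working assumption is then needed only in the ``only if'' direction, where the paper also uses it).
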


\begin{proof}
Take $\alpha \,\in\, I$ and $\beta \,\in\, I(P)$ such that $\alpha + \beta \,\in\, R \setminus I(P)$. In
particular, we have $\beta \,\neq\, \pm \alpha$. Since $G$ is simply-connected, $\fg$ is the Lie algebra 
defined by Serre's relations (this is explained for example in \cite[Remark 
2.2.3]{cr}), so we can choose a basis of $\fg$ such that the coefficients of the Lie 
bracket are those of the Chevalley basis \cite{carter}. Consider the biggest integer
$p$ such that $\alpha - p \beta \,\in\, R$. This $p$ is smaller than the length of the
$\beta$-string of 
roots through $\alpha$ minus $1$ (since $\alpha + \beta \,\in\, R$), and thus, by 
the working Assumption, we have $p \,\leq\, \car(k)-2$.
This implies that $p+1\,<\,\car(k)$. It now 
follows from \cite[Theorem 4.2.1]{carter} that $[\fg^{\beta},\fg^\alpha] \,=\, 
\fg^{\alpha+\beta}$. Assuming that $M(I)$ is $P$-stable, we have it to be
$\fp$-stable, and therefore $\alpha + \beta \,\in\, I$.

On the other hand, let $U_{\beta} \,\subset\, G$ be the one-parameter additive subgroup 
corresponding to the root $\beta$. Since $U_{\beta} \cdot \fg^\alpha \,\subset\, 
\bigoplus_{k \geq 0} \fg^{\alpha + k \beta}$, from \eqref{e1} it follows
that $M(I)$ is $U_{\beta}$-stable for any root $\beta$ in $I(P)$, and 
thus $M(I)$ is $P$-stable.
\end{proof}

\begin{lemma}\label{lemm:polystable}
The tangent bundle $T(G/P)$ is polystable.
\end{lemma}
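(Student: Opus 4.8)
The plan is to derive polystability in characteristic $p$ from the known stability of $T(G_\C/P_\C)$ in characteristic $0$, transporting canonical subsheaves to characteristic zero through Proposition~\ref{prop:p=0}. The mechanism I would use repeatedly is that any subsheaf of $T(G/P)$ singled out by a purely categorical or numerical universal property is automatically $G$-invariant: the $G$-equivariant structure makes each $g\in G$ act by an automorphism of $T(G/P)$, such an automorphism carries the distinguished subsheaf to another subsheaf with the same universal property, and uniqueness then forces the subsheaf to be fixed. By the remark preceding the lemma, every $G$-invariant subsheaf is in fact a subbundle, corresponding to a $P$-submodule $M\subset\fg/\fp$, so Proposition~\ref{prop:p=0} applies to it and yields a subbundle $E_\C\subset T(G_\C/P_\C)$ of the same rank and degree, hence of the same slope. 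Note also that $T(G/P)$ and $T(G_\C/P_\C)$ have the same slope, because the rank ($=\dim G/P$) and the anticanonical degree are the same combinatorial invariants in both characteristics.

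First I would check that $T(G/P)$ is semistable. If it were not, its maximal destabilizing subsheaf $F$ — the unique maximal subsheaf attaining the maximal slope $\mu_{\max}$ — would be canonical, hence $G$-invariant by the mechanism above, and proper since $\mu_{\max}>\mu(T(G/P))$ (an equality of slopes would contradict strictness). Transporting $F$ to characteristic $0$ would produce a proper subbundle of $T(G_\C/P_\C)$ of slope $\mu_{\max}>\mu(T(G_\C/P_\C))$, contradicting the stability of $T(G_\C/P_\C)$. Hence $T(G/P)$ is semistable; write $\mu=\mu(T(G/P))$.

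Next I would show that the socle $S$ of $T(G/P)$ — the maximal polystable subsheaf of slope $\mu$, i.e.\ the sum of all stable subsheaves of slope $\mu$ — is all of $T(G/P)$. The socle is the canonical maximal object of its kind, hence $G$-invariant, hence a subbundle $E_M$ corresponding to a $P$-submodule $M$. Proposition~\ref{prop:p=0} gives a subbundle $S_\C\subset T(G_\C/P_\C)$ with $\mu(S_\C)=\mu(S)=\mu=\mu(T(G_\C/P_\C))$; stability of $T(G_\C/P_\C)$ forbids a proper subbundle of slope equal to $\mu(T(G_\C/P_\C))$, so $S_\C$ has full rank, whence $\rk(S)=\rk(T(G/P))$ and therefore $S=T(G/P)$. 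Since the socle is by construction polystable, this proves the lemma.

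The step I expect to be the main obstacle is the $G$-invariance of the maximal destabilizing subsheaf and of the socle; everything else is formal once this is in place. It relies on the uniqueness of these canonical subsheaves together with the compatibility of the $G$-action, and on the dictionary between $G$-invariant subsheaves and $P$-submodules of $\fg/\fp$ recorded before the lemma. A secondary point to verify carefully is the equality of slopes across characteristics, which I would justify by the common origin of $G/P$ and $G_\C/P_\C$ from the split Chevalley scheme $G_\Z$ over $\Z$.
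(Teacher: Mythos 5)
Your proposal is correct and follows essentially the same route as the paper: the first term of the Harder--Narasimhan filtration (the maximal destabilizing subsheaf) is $G$-invariant by uniqueness, hence a subbundle transportable to characteristic $0$ via Proposition~\ref{prop:p=0}, where stability of $T(G_\C/P_\C)$ gives the contradiction proving semistability; the identical argument applied to the socle then yields polystability. You merely spell out details the paper leaves implicit, namely the invariance mechanism, the full-rank conclusion for the socle, and the equality of slopes across characteristics coming from the Chevalley scheme $G_\Z$.
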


\begin{proof}
Let $E$ be the first term of the Harder-Narasimhan filtration of $T(G/P)$. First assume 
$E \,\neq\, T(G/P)$, so
\begin{equation}\label{e2}
\mu(E) \,> \, \mu(T(G/P))\, ,
\end{equation}
where $\mu$ denotes the ${\rm slope}$, namely the quotient of the degree by the rank.
Since the anticanonical polarization of $G/P$ is fixed by $G$, from
the uniqueness of the Harder-Narasimhan filtration it follows that
$E$ is $G$-stable. By Proposition \ref{prop:p=0} and 
stability of $T(G_\C/P_\C)$ in characteristic $0$ \cite[Theorem 2.1]{AB}, we thus have
$\mu(E) \,< \, \mu(T(G/P))$ which contradicts \eqref{e2}. So $T(G/P)$ is semistable.

We can then similarly argue with the polystable socle (cf. \cite[page 23, Lemma 
1.5.5]{HL}) of $T(G/P)$ to deduce that $T(G/P)$ is polystable.
\end{proof}

Since $T(G/P)$ is polystable there are non-isomorphic stable vector bundles
$E_1\, ,\ldots\, ,E_r$ of same slope such that the natural map
\begin{equation}\label{e6}
\bigoplus_{i=1}^r Hom(E_i,\, T(G/P)) \otimes E_i \ \longrightarrow T(G/P)
\end{equation}
is an isomorphism.
We note that $E_1\, ,\ldots\, ,E_r$ are unique up to permutations of
$\{1\, ,\ldots\, ,r\}$.

\begin{lemma}\label{lemm:Ej-equivariant}
 Take any $g \,\in\, G$ and integer $1\,\leq\, j \,\leq\, r$. Then
$g^* E_j \,\simeq\, E_j$ as vector bundles on $G/P$.
\end{lemma}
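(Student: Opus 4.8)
The plan is to combine the uniqueness of the stable constituents $E_1,\ldots,E_r$ with the fact that $T(G/P)$ is canonically $G$-equivariant, and then run a semicontinuity-in-families argument over the connected group $G$. First, for any $g\,\in\, G$ the left translation $L_g\colon G/P\to G/P$ is an automorphism, so its differential gives a canonical isomorphism $g^*T(G/P)\,\simeq\, T(G/P)$. Pulling back the isomorphism \eqref{e6} by $L_g$ therefore yields
\begin{equation*}
\bigoplus_{i=1}^r \operatorname{Hom}(E_i,\, T(G/P)) \otimes g^* E_i \ \xrightarrow{\ \sim\ }\ g^* T(G/P) \ \simeq\ T(G/P)\, .
\end{equation*}
Since $L_g$ is an isomorphism, each $g^*E_i$ is again stable of slope $\mu(T(G/P))$, and it occurs as a direct summand of $T(G/P)$ (with nonzero multiplicity, as $\operatorname{Hom}(E_i,T(G/P))\neq 0$). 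Any stable summand of a polystable bundle of the same slope is isomorphic to one of the $E_j$ by the uniqueness statement following \eqref{e6}; so for each $g$ and each $i$ there is a $j$ with $g^*E_i\,\simeq\, E_j$.

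Next I would show, for fixed $i$, that $g\,\mapsto\, j$ is locally constant. Consider the partition $G\,=\,\bigsqcup_{j=1}^r \{\,g\in G\,:\,g^*E_i\simeq E_j\,\}$. It is disjoint because the $E_j$ are pairwise non-isomorphic, and it exhausts $G$ by the previous paragraph. For stable bundles of equal slope one has $\dim\operatorname{Hom}(F,E_j)\,=\,1$ if $F\simeq E_j$ and $0$ otherwise, so the $j$-th piece is exactly the locus where $\dim\operatorname{Hom}(g^*E_i,E_j)\,\geq\, 1$. To prove this locus is closed, form the family $a^*E_i$ on $G\times(G/P)$, where $a\colon G\times(G/P)\to G/P$ is the action morphism, so that $(a^*E_i)|_{\{g\}\times(G/P)}\,\simeq\, g^*E_i$. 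The sheaf $(a^*E_i)^\vee\otimes \mathrm{pr}_2^*E_j$ is locally free, hence flat over $G$, and $\mathrm{pr}_1\colon G\times(G/P)\to G$ is proper; the semicontinuity theorem then shows that $g\mapsto \dim H^0\bigl((G/P),\,(g^*E_i)^\vee\otimes E_j\bigr)\,=\,\dim\operatorname{Hom}(g^*E_i,E_j)$ is upper semicontinuous. Thus each piece of the partition is closed.

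Finally, since $G$ is connected, a partition into finitely many disjoint closed subsets forces all but one of them to be empty, and the nonempty one is the piece containing the identity $e$, for which $e^*E_i\,=\,E_i$. Hence $G\,=\,\{\,g:g^*E_i\simeq E_i\,\}$, which is the assertion $g^*E_j\,\simeq\, E_j$ for all $g\in G$ and all $j$. The step requiring the most care is the family construction: one must check that $a^*E_i$ is genuinely a vector bundle on $G\times(G/P)$ restricting to $g^*E_i$ on the slices, so that the hypotheses of the semicontinuity theorem are met; once this is in place, the closedness of the loci and the connectedness argument are formal.
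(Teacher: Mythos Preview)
Your argument is correct. Both your proof and the paper's hinge on the family $a^*E_i$ over $G$ and upper semicontinuity of fiberwise $\operatorname{Hom}$-dimensions, but the endgame differs. The paper applies semicontinuity in the opposite direction: since $\operatorname{Hom}(g^*E_i,E_\ell)$ vanishes at $g=e$ for $i\neq\ell$, it vanishes over a dense open, forcing the global $\operatorname{Hom}(\phi^*E_i,p_2^*E_\ell)$ to be zero; hence the isomorphism $\Phi\colon\phi^*T(G/P)\to p_2^*T(G/P)$ is block-diagonal, and restricting the $j$-th block to $\{g\}\times(G/P)$ yields $\operatorname{Hom}(E_j,T(G/P))\otimes g^*E_j\,\simeq\,\operatorname{Hom}(E_j,T(G/P))\otimes E_j$, after which Atiyah's Krull--Schmidt theorem identifies the indecomposable $g^*E_j$ with $E_j$. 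Your route instead uses semicontinuity to show each piece of the partition $\{g:g^*E_i\simeq E_j\}$ is closed and then invokes connectedness of $G$; this trades the citation of Krull--Schmidt for an explicit topological argument on $G$, while the paper's version never needs to partition $G$ at all. Both are clean, and the underlying analytic input is the same.
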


\begin{proof}
Let $\phi\, :\, G\times (G/P)\,\longrightarrow\, G/P$ be the left-translation
action. Let $p_2\, :\, G\times (G/P)\,\longrightarrow\, G/P$ be the projection
to the second factor. The action $\phi$ produces an isomorphism of vector bundles
\begin{equation}\label{e4}
\Phi\, :\, \bigoplus_{i=1}^r Hom(E_i,\, T(G/P)) \otimes \phi^* E_i
\,=\, \phi^*T(G/P) \,\longrightarrow\, p^*_2T(G/P)\,=\,
\bigoplus_{i=1}^r Hom(E_i,\, T(G/P))\otimes p_2^* E_i\, .
\end{equation}
For $i\, \not=\, \ell$, as $E_i$ and $E_l$ are stable of the same slope, we have
$$
Hom((\phi^* E_i)\vert_{\{e\}\times G/P}\, ,
(p^*_2 E_\ell)\vert_{\{e\}\times G/P})\,=\, Hom(E_i\, ,E_\ell)\,=\, 0\, .
$$
Hence, using semi-continuity,
\begin{equation}\label{e3}
Hom(\phi^* E_i\, , p^*_2 E_\ell)\,=\, 0\, .
\end{equation}
{}From \eqref{e3} it follows immediately that $\Phi$ in \eqref{e4} takes
$Hom(E_i,\, T(G/P)) \otimes \phi^* E_i$ to itself for every $1\, \leq\, i\, \leq\, r$.
In particular, we have
$Hom(E_j,\, T(G/P)) \otimes \phi^* E_j\,\simeq\,
Hom(E_j,\, T(G/P)) \otimes p_2^* E_j$. Fix $g \in G$: restricting to $\{g\} \times G/P$, we get
\begin{equation}\label{e5}
 Hom(E_j,\, T(G/P)) \otimes g^*E_j\,\simeq\,
Hom(E_j,\, T(G/P)) \otimes E_j.
\end{equation}
Since $E_j$ is
stable, we know that $g^*E_j$ is indecomposable. Now in view of the uniqueness of
the decomposition into a direct sum of indecomposable vector bundles (see \cite[p. 315,
Theorem 2]{At}), from \eqref{e5} we conclude that $g^* E_j \,\simeq\, E_j$.
\end{proof}

\begin{lemma}\label{lemm:equivariant}
For all $j\, \in\, [1\, ,r]$, the vector bundle $E_j$ is $G$-equivariant.
\end{lemma}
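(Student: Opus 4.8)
The plan is to promote the pointwise isomorphisms $g^{*}E_{j}\,\simeq\,E_{j}$ of Lemma~\ref{lemm:Ej-equivariant} into an actual $G$-linearization of $E_{j}$; the only genuine obstruction will be the splitting of a central extension of $G$ by $\G_{m}$, and this is precisely where simple-connectedness of $G$ enters. First I would globalize the isomorphisms. Keep the action $\phi$ and the projection $p_{2}$ on $G\times(G/P)$ from the proof of Lemma~\ref{lemm:Ej-equivariant}, and let $p_{1}\,:\,G\times(G/P)\,\longrightarrow\, G$ be the first projection. Since $E_{j}$ is stable it is simple, so by Lemma~\ref{lemm:Ej-equivariant} we have $\dim Hom(g^{*}E_{j},\, E_{j})\,=\,1$ for every $g\,\in\, G$. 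Hence the coherent sheaf
$$
L\,:=\,(p_{1})_{*}\,\mathcal{H}om(\phi^{*}E_{j},\, p_{2}^{*}E_{j})
$$
has constant one-dimensional fibers over the smooth (hence reduced) variety $G$, so by cohomology and base change it is a line bundle on $G$, and a nowhere-vanishing section of $L$ is exactly an isomorphism $\Psi\,:\,\phi^{*}E_{j}\,\longrightarrow\, p_{2}^{*}E_{j}$ restricting to an isomorphism on every slice $\{g\}\times(G/P)$, i.e.\ a candidate linearization.

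Next I would invoke $\Pic(G)\,=\,0$, valid because $G$ is semisimple and simply connected, to conclude that $L$ is trivial. Thus such a $\Psi$ exists, and after rescaling by a unit I may normalize it to be the identity over the base point $e\,\in\, G$. By Rosenlicht's theorem the only units on the semisimple groups $G$ and $G\times G$ are the scalars, so $\Psi$ is determined up to a constant in $k^{*}$. The failure of $\Psi$ to satisfy the linearization cocycle identity on $G\times G\times(G/P)$ is, by simplicity of $E_{j}$ together with triviality of the units on $G\times G$, measured by a constant $k^{*}$-valued $2$-cocycle. The clean way to package this is to form the scheme $\tG$ of pairs $(g,\psi)$ with $g\,\in\, G$ and $\psi\,:\,g^{*}E_{j}\,\xrightarrow{\ \sim\ }\,E_{j}$, with product $(g_{1},\psi_{1})\cdot(g_{2},\psi_{2})\,=\,(g_{1}g_{2},\,\psi_{2}\circ g_{2}^{*}\psi_{1})$; this is a central extension
$$
1\,\longrightarrow\,\G_{m}\,\longrightarrow\,\tG\,\longrightarrow\, G\,\longrightarrow\,1\, ,
$$
whose underlying $\G_{m}$-torsor $\tG\,\to\, G$ has class in $\Pic(G)\,=\,0$ and is therefore trivial as a scheme, making $\tG$ a connected algebraic group. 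By construction $E_{j}$ is tautologically $\tG$-equivariant, the central $\G_{m}$ acting by scaling.

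The main obstacle, and the heart of the argument, is to split this central extension: a homomorphic section $\sigma\,:\,G\,\longrightarrow\,\tG$ is exactly a coherent family $g\,\mapsto\,\sigma(g)\,:\,g^{*}E_{j}\,\xrightarrow{\ \sim\ }\,E_{j}$ satisfying $\sigma(g_{1}g_{2})\,=\,\sigma(g_{2})\circ g_{2}^{*}\sigma(g_{1})$, which is precisely a $G$-equivariant structure on $E_{j}$. Here I would use the fact that every central extension of a simply-connected semisimple group by $\G_{m}$ is trivial; this is the step that genuinely requires simple-connectedness, and it is where I expect the real content to lie (for non-simply-connected $G$, e.g.\ $\G_{m}\to\mathrm{GL}_{n}\to\mathrm{PGL}_{n}$, no such splitting need exist). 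Choosing a splitting $\sigma$ then furnishes the desired $G$-linearization of $E_{j}$, which proves the lemma for every $j\,\in\,[1\, ,r]$.
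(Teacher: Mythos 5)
Your proposal is correct and is essentially the paper's own argument: the paper likewise forms the group $\tG$ of pairs $(g,h)$ with $h$ an automorphism of $E_j$ covering translation by $g$, notes that simplicity of $E_j$ together with Lemma \ref{lemm:Ej-equivariant} makes $\tG$ a central extension of $G$ by $\G_m$ (algebraic, being the total space of a $\G_m$-torsor over $G$), and obtains the equivariant structure by splitting this extension using simple-connectedness of $G$. The only real difference is that where you quote the splitting of central extensions of simply connected semisimple groups by $\G_m$ as a known theorem, the paper proves it on the spot --- the unipotent radical of $\tG$ is trivial, so $\tG$ is reductive, hence $[\tG,\tG]$ is semisimple and maps onto $G=[G,G]$ by an isogeny, which simple-connectedness forces to be an isomorphism --- and your preliminary $\Pic(G)=0$ and Rosenlicht steps, though correct, become redundant once the extension is in place.
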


\begin{proof}
Fix an integer $1\, \leq\,j\, \leq\, r$.
We now introduce the group of symmetries of the vector bundle $E_j$:
Let $\tG$ denote the set of pairs $(g\, ,h)$, where $g \,\in\, G$ and $h
\,\in\, Aut(E_j)$, such that the diagram
 $$
 \xymatrix{
 E_j \ar[r]^{h} \ar[d] & E_j \ar[d] \\
 G/P \ar[r]_{g} & G/P
 }
 $$
 commutes. Since $E_j$ is simple, $Aut_{G/P}(E_j) \,\simeq\, \G_m$, and therefore we get a
central extension
$$ 1\,\longrightarrow\, \G_m \,\longrightarrow\, \tG \,\stackrel{pr_1}{\longrightarrow}
\, G \,\longrightarrow\, 1\, . $$
By Lemma \ref{lemm:Ej-equivariant}, the above homomorphism $pr_1$ is surjective. This
$\tG$ is an algebraic group. To see this, consider the direct image
$p_{2*}\Iso(\phi^*E_j\, , p^*_2E_j)$, where $\phi$ and $p_2$ are the
projections in the proof of Lemma \ref{lemm:Ej-equivariant}, and
$\Iso(\phi^*E_j\, , p^*_2E_j)$ is the sheaf of isomorphisms between the
two vector bundles $\phi^*E_j$ and $p^*_2E_j$. This direct image is a principal
${\mathbb G}_m$-bundle over $G/P$. The total space of this
principal ${\mathbb G}_m$-bundle is identified with $\tG$.

We consider the derived subgroup $[\tG\, ,\tG]$. Since $G$ is simple and not abelian, we have
$[G\, ,G]\,=\,G$, so $\pi([\tG\, ,\, \tG])\,=\,G$. The unipotent radical of $\tG$ is 
trivial. Indeed, the unipotent radical is mapped to the trivial subgroup of $G$ 
since $G$ is simple. Therefore it is included in $\G_m$ and so the unipotent 
radical is trivial. Since $\tG$ is reductive, $[\tG\, ,\tG]$ is semi-simple, hence 
a proper subgroup of $\tG$ (the radical of $\tG$ contains $\G_m$ hence $\tG$ is not semi-simple).
Thus the restriction of $pr_1$ to $[\tG\, ,\tG]$ is an 
isogeny. Since $G$ is simply-connected, the restriction of $pr_1$ to $[\tG\, ,\tG]$ is an isomorphism. 
Consequently, the tautological action of $[\tG\, ,\tG]$ on $E_j$ makes it a $G$-equivariant bundle.
\end{proof}

\begin{lemma}\label{lemm:r=1}
The integer $r$ in \eqref{e6} is $1$.
\end{lemma}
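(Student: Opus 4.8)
The plan is to prove that $T(G/P)$ is indecomposable as a $G$-equivariant bundle; once this is known, $r=1$ is immediate. Recall that the functor $E \mapsto E_{x_0}$ (fibre at the base point $x_0 = eP/P$) is an additive equivalence between $G$-equivariant bundles on $G/P$ and rational $P$-modules, and that it sends $T(G/P)$ to $T_{x_0}(G/P) \simeq \bigoplus_{\alpha \in R \setminus I(P)} \fg^\alpha \simeq \fg/\fp$. By Lemma \ref{lemm:equivariant} every $E_i$ is $G$-equivariant, so each $W_i := Hom(E_i,\, T(G/P))$ is naturally a $G$-module and \eqref{e6} is an isomorphism of $G$-equivariant bundles. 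Applying the equivalence turns \eqref{e6} into a decomposition $\fg/\fp \simeq \bigoplus_{i=1}^r W_i \otimes (E_i)_{x_0}$ of $P$-modules. Thus, if $\fg/\fp$ is indecomposable as a $P$-module, at most one summand can be nonzero, and since each $W_i \neq 0$ we get $r=1$.

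So I would reduce everything to showing that $\fg/\fp$ is an indecomposable $P$-module, and I would do this by proving that its lowest weight line $\fg^{-\theta}$ generates it. Since $\fg$ is simple, its highest root $\theta$ has full support on the simple roots; hence for the proper parabolic $P$ the root $-\theta$ lies in $R \setminus I(P)$ and occurs in $\fg/\fp$ with multiplicity one. By Lemma \ref{lemm:caracterisation} the $P$-submodule generated by $\fg^{-\theta}$ is $M(I_0)$ for the smallest subset $I_0 \subseteq R \setminus I(P)$ that contains $-\theta$ and is closed under \eqref{e1}. I claim $I_0 = R \setminus I(P)$, which I would verify by induction on $\mathrm{ht}(\theta) - \mathrm{ht}(-\gamma) \geq 0$ (the case of value $0$ being $\gamma = -\theta$): given $\gamma \in R \setminus I(P)$ with $\gamma \neq -\theta$, the positive root $-\gamma$ is not the unique maximal highest root, so some simple root $\alpha_i$ satisfies $-\gamma + \alpha_i \in R$; then $\delta := \gamma - \alpha_i$ is again a negative, non-Levi root (its coefficient on a simple root outside the Levi of $P$ stays positive), so $\delta \in R \setminus I(P)$ with strictly smaller value of $\mathrm{ht}(\theta) - \mathrm{ht}(-\delta)$, whence $\delta \in I_0$ by the inductive hypothesis. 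As $\gamma = \delta + \alpha_i$ with $\alpha_i \in I(P)$ and $\gamma \in R \setminus I(P)$, the closure condition \eqref{e1} gives $\gamma \in I_0$.

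Indecomposability now follows formally: in any splitting $\fg/\fp = M \oplus M'$ into $P$-submodules the one-dimensional space $\fg^{-\theta}$ lies in one factor, say $M$, and then the submodule it generates---all of $\fg/\fp$---is contained in $M$, forcing $M' = 0$. I expect the one genuinely substantive point to be the combinatorial generation claim, and specifically the verification that the climbing procedure never leaves $R \setminus I(P)$; this is precisely where the full-support property of $\theta$ and the characterization of $P$-submodules in Lemma \ref{lemm:caracterisation} (itself depending on the working Assumption) enter.
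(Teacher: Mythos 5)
Your proof is correct, but it takes a genuinely different route from the paper's. The paper disposes of this lemma in three lines using machinery already in place: by Lemma \ref{lemm:polystable} the direct summand $Hom(E_1,T(G/P))\otimes E_1$ of \eqref{e6} has the same slope as $T(G/P)$; by the proof of Lemma \ref{lemm:equivariant} it is a $G$-stable subbundle; hence Proposition \ref{prop:p=0} produces a subbundle of $T(G_\C/P_\C)$ with the same rank and degree, and stability of $T(G_\C/P_\C)$ in characteristic zero \cite[Theorem 2.1]{AB} forces that subbundle to be everything, i.e.\ $Hom(E_1,T(G/P))\otimes E_1=T(G/P)$ and $r=1$. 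You instead prove the stronger, purely representation-theoretic statement that $\fg/\fp$ is an indecomposable $P$-module, by showing that the lowest root space $\fg^{-\theta}$ generates it. Your induction on $\mathrm{ht}(\theta)-\mathrm{ht}(-\gamma)$ is sound: adding a simple root can only increase the coefficient on a simple root outside the Levi, so the climbing never leaves $R\setminus I(P)$, and the closure condition \eqref{e1} then absorbs $\gamma=\delta+\alpha_i$; and the passage from indecomposability to $r=1$ is legitimate because Lemma \ref{lemm:equivariant} makes each $E_i$, hence the evaluation isomorphism \eqref{e6}, equivariant, so that taking fibers at $x_0$ gives a direct sum decomposition of $\fg/\fp$ into nonzero $P$-modules. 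As for what each approach buys: the paper's argument is shorter and recycles Proposition \ref{prop:p=0}, but imports the characteristic-zero stability theorem a second time; yours stays entirely in characteristic $p$ for this step (the working Assumption enters only through Lemma \ref{lemm:caracterisation}) and yields the extra structural fact that $T(G/P)$ is indecomposable even as a homogeneous bundle --- though this does not make the characteristic-zero input superfluous for the Main Theorem, since Lemma \ref{lemm:polystable}, which produces \eqref{e6} in the first place, still requires it. Two facts you assert without proof should be flagged, though both are standard: that the highest root of an irreducible root system has full support, and that every positive root $\delta\neq\theta$ admits a simple root $\alpha_i$ with $\delta+\alpha_i\in R$; the latter is not completely obvious (if no such $\alpha_i$ exists, the root-string argument only shows $\delta$ is dominant, and one must still rule out the highest short root, e.g.\ by invoking the uniqueness of the highest-weight line of the complex adjoint representation).
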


\begin{proof}
Since $Hom(E_1,T(G/P)) \otimes E_1$ is a direct summand of
$T(G/P)$ (see \eqref{e6}), from Lemma \ref{lemm:polystable} we know that
the slope of $Hom(E_1,T(G/P)) \otimes E_1$ coincides with the slope
of $T(G/P)$. In the proof of Lemma \ref{lemm:equivariant} we saw that
$Hom(E_1,T(G/P)) \otimes E_1$ is a $G$-equivariant direct summand of $T(G/P)$.
As $T(G_\C/P_\C)$ is stable, \cite[Theorem 2.1]{AB}, from Proposition 
\ref{prop:p=0} it now follows that $Hom(E_1,T(G/P)) \otimes E_1 \,=\, T(G/P)$.
\end{proof}

\begin{lemma}\label{lemm:f}
$\dim Hom(E_1\, ,T(G/P)) \,= \, 1$.
\end{lemma}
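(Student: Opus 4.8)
The plan is to recognize the multiplicity space $M \,:=\, \mathrm{Hom}(E_1,\, T(G/P))$ as a $G$-module and then feed the resulting tensor decomposition of global sections into Proposition \ref{prop:tensor-product}. By Lemma \ref{lemm:r=1}, the natural evaluation map \eqref{e6} specializes to an isomorphism $M \otimes E_1 \,\simeq\, T(G/P)$ of vector bundles. Since $E_1$ is $G$-equivariant by Lemma \ref{lemm:equivariant} and $T(G/P)$ carries its tautological $G$-equivariant structure, each $g \,\in\, G$ acts linearly on the space $M$ of bundle homomorphisms by $\phi \,\mapsto\, g \circ \phi \circ g^{-1}$, and with this action the evaluation isomorphism becomes $G$-equivariant.

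First I would pass to global sections of this $G$-equivariant isomorphism, obtaining
$$H^0(G/P,\, T(G/P)) \,\simeq\, M \otimes H^0(G/P,\, E_1)$$
as $G$-modules. Proposition \ref{prop:tensor-product} then leaves only two possibilities: either $M \,=\, k$, in which case $\dim \mathrm{Hom}(E_1,\, T(G/P)) \,=\, 1$ and we are done, or $H^0(G/P,\, E_1) \,=\, k$.

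Next I would exclude the second alternative by a dimension count. Because $M$ is merely a vector space, the bundle isomorphism gives $T(G/P) \,\simeq\, E_1^{\oplus \dim M}$, so that $\dim M \,=\, \rk(T(G/P))/\rk(E_1) \,\le\, \rk(T(G/P)) \,=\, \dim(G/P)$. On the other hand, if $H^0(G/P,\, E_1)\,=\,k$, then the displayed isomorphism of $G$-modules yields $\dim H^0(G/P,\, T(G/P)) \,=\, \dim M$. But the description of $H^0(G/P,\, T(G/P))$ recalled in the proof of Proposition \ref{prop:tensor-product} (via \cite[Th\'eor\`eme 1]{demazure}) shows that this space always contains $\fg$, whence $\dim H^0(G/P,\, T(G/P)) \,\ge\, \dim \fg \,=\, \dim G \,>\, \dim(G/P)$. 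The two inequalities are incompatible, so the case $H^0(G/P,\, E_1) \,=\, k$ cannot occur, and therefore $M \,=\, k$.

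The step I expect to be the main obstacle is the promotion of the abstract multiplicity space $M$ to a genuine $G$-module in such a way that the evaluation isomorphism $M \otimes E_1 \,\simeq\, T(G/P)$ is $G$-equivariant; only then may Proposition \ref{prop:tensor-product} be applied to the induced decomposition of $H^0(G/P,\, T(G/P))$. This rests crucially on the $G$-equivariance of $E_1$ furnished by Lemma \ref{lemm:equivariant} (which in turn uses Lemmas \ref{lemm:polystable}, \ref{lemm:Ej-equivariant} and \ref{lemm:r=1}). Once that compatibility is in place, both the invocation of the proposition and the concluding dimension count are routine.
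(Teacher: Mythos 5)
Your proof is correct, and its skeleton coincides with the paper's: give $M\,=\,Hom(E_1,\,T(G/P))$ its natural $G$-module structure coming from the equivariance of $E_1$ (Lemma \ref{lemm:equivariant}), pass to global sections to obtain $H^0(G/P,\,T(G/P))\,=\,M\otimes H^0(G/P,\,E_1)$ as $G$-modules, and invoke Proposition \ref{prop:tensor-product}. Where you genuinely diverge is in excluding the alternative $H^0(G/P,\,E_1)\,=\,k$. The paper does this more economically, with no further representation-theoretic input: $T(G/P)$ is globally generated (the action map $\fg\otimes\O\,\to\,T(G/P)$ is surjective because $G\to G/P$ is smooth), hence so is its direct summand $E_1$; a globally generated bundle with $h^0\,=\,1$ would be a quotient of $\O_{G/P}$, hence trivial, contradicting $\mu(E_1)\,=\,\mu(T(G/P))\,>\,0$. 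You instead run a dimension count, $\dim M\,\le\,\rk\, T(G/P)\,=\,\dim G/P\,<\,\dim\fg\,\le\,\dim H^0(G/P,\,T(G/P))$, which also works and costs nothing extra, since Proposition \ref{prop:tensor-product} already rests on \cite{demazure}. One caveat on your phrasing: in small characteristic the natural map $\fg\,\to\,H^0(G/P,\,T(G/P))$ need not be injective (for $\mathfrak{sl}_2$ in characteristic $2$, which the working assumption permits for simply-laced types, the centre is killed), so ``always contains $\fg$'' should be read as the dimension bound $\dim H^0(G/P,\,T(G/P))\,\ge\,\dim\fg$; that bound is exactly what the classification recalled in the proof of Proposition \ref{prop:tensor-product} provides, since each of the possible modules $\fg$, $\mathfrak{sl}(2n)$, $\mathfrak{so}(2n+2)$, $\mathfrak{so}(7)$ has dimension at least $\dim\fg$. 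With that reading your argument is complete; the paper's global-generation step is simply the shorter route to the same exclusion.
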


\begin{proof}
{}From Lemma \ref{lemm:r=1}
we have $H^0(G/P,\, T(G/P)) \,=\, Hom(E_1,\, T(G/P)) \otimes H^0(G/P,\,E_1)$.
Since $T(G/P)$ is globally generated, so is $E_1$ and thus $\dim H^0(G/P,\,E_1)\,>\, 1$.
Thus, as $E_1$ is $G$-equivariant,
the lemma follows from Proposition \ref{prop:tensor-product}.
\end{proof}

{}From Lemma \ref{lemm:polystable}, Lemma \ref{lemm:r=1} and Lemma \ref{lemm:f}
it follows that $T(G/P)$ is stable.

The following lemma completes the proof of the main Theorem.

\begin{lemma}\label{lemm:frobenius}
Let $E$ be a semi-stable (respectively, stable) $G$-equivariant vector bundle on
$G/P$. Then $E$ is Frobenius semi-stable (respectively, Frobenius stable). 
\end{lemma}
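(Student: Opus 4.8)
The plan is to show that every iterated Frobenius pull-back $F^{n*}E$ is semistable (respectively, stable), where $F$ denotes the Frobenius morphism of $G/P$. Since $G$ and $P$ descend to the Chevalley scheme over $\Z$, they are defined over the prime field, so the Frobenius endomorphism $\sigma$ of $G$ is a faithfully flat surjective homomorphism and the Frobenius of $G/P$ intertwines the left $G$-action with its $\sigma$-twist, $F(g\cdot x)\,=\,\sigma(g)\cdot F(x)$. Hence the equivariant structure of $E$ pulls back and $F^{n*}E$ is again $G$-equivariant, so it is enough to treat $n=1$ and iterate. Throughout I would use the equivalence between $G$-equivariant bundles on $G/P$ and $P$-modules (take the fibre at $x_0$, as in the proof of Proposition~\ref{prop:p=0}); write $W$ for the $P$-module attached to $E$. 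Under this dictionary $F^*E$ corresponds to the Frobenius twist $W^{[1]}$, that is $W$ with $P$-action precomposed with $\sigma$; $G$-stable subbundles correspond to $P$-submodules; and, because $\sigma\,:\,P\to P$ is an epimorphism, a subspace is a submodule of $W^{[1]}$ if and only if it is a submodule of $W$. Consequently the $G$-stable subbundles of $F^*E$ are exactly the pull-backs $F^*V$ of the $G$-stable subbundles $V\subset E$, and, since the Frobenius multiplies degrees by $p$ and fixes ranks, $\mu(F^*V)\,=\,p\,\mu(V)$ and $\mu(F^*E)\,=\,p\,\mu(E)$.

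For the semistable case I would argue as in Lemma~\ref{lemm:polystable}. If $F^*E$ were not semistable, then $G$-invariance of the anticanonical polarization together with uniqueness of the Harder--Narasimhan filtration would make its maximal destabilizing subsheaf $G$-stable, hence of the form $F^*V$ with $V\subset E$. Then $p\,\mu(V)\,=\,\mu(F^*V)\,>\,\mu(F^*E)\,=\,p\,\mu(E)$ would force $\mu(V)\,>\,\mu(E)$, contradicting semistability of $E$. Thus $F^*E$ is semistable, and induction gives Frobenius semistability.

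For the stable case the slope bookkeeping alone no longer suffices, since stability is not detected by $G$-stable subbundles. I would first use the previous step to know that $F^*E$ is semistable, and then examine its polystable socle $S$; by uniqueness $S$ is $G$-stable, so $S\,=\,F^*V$ with $\mu(V)\,=\,\mu(E)$, and stability of $E$ forces $V\,=\,E$, i.e. $F^*E$ is polystable, say $F^*E\,\cong\,\bigoplus_i A_i\otimes S_i$ with the $S_i$ non-isomorphic and stable. As $G$ is connected it cannot permute the finitely many isotypic components nontrivially, so each $A_i\otimes S_i$ is $G$-stable; if there were two of them, each would equal some $F^*V_i$ with $V_i\subset E$ proper and $\mu(V_i)=\mu(E)$, contradicting stability of $E$. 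Hence $F^*E\,\cong\,A\otimes S_1$ for a single stable $S_1$, the multiplicity space $A$ carrying a $G$-module structure as in Lemma~\ref{lemm:equivariant}. It then remains to prove $\dim A\,=\,1$.

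The crux, and the step I expect to be the main obstacle, is exactly this last point. Here I would use that the $P$-module of $F^*E$ is the Frobenius twist $W^{[1]}$, so all its weights lie in $p$ times the weight lattice; matching $W^{[1]}$ with the restriction of $A\otimes S_1$ to $P$, both $A$ and $S_1$ must have all weights in $p$ times the weight lattice. A nontrivial restricted irreducible $G$-module has two weights differing by a root, which under the working Assumption cannot both lie in a single coset modulo $p$ times the weight lattice; by Steinberg's tensor product theorem this forces $A$ and $S_1$ to be Frobenius twists, $A\,=\,B^{[1]}$ and $S_1\,=\,F^*S_0$. Untwisting (the Frobenius-twist functor being fully faithful) then yields $E\,\cong\,\underline{B}\otimes S_0$ with $\dim B\,=\,\dim A$, so $E$ would split as $S_0^{\oplus\dim A}$ and fail to be stable unless $\dim A\,=\,1$. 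This contradiction shows $F^*E$ is stable, and induction finishes the proof. The semistable reduction and the degree computation are routine; the genuine difficulty is this Frobenius descent of the isotypic decomposition, where simple-connectedness of $G$ and Steinberg's theorem enter.
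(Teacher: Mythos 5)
Your semistable case is essentially the paper's own argument (the first term of the Harder--Narasimhan filtration of $F^*E$ is $G$-stable, its fibre at $x_0$ is untwisted into a $P$-submodule of $E_{x_0}$ using bijectivity of Frobenius on $P$, and slopes give a contradiction), and your first two reductions in the stable case --- polystability of $F^*E$ via the $G$-stable socle, and collapse to a single isotypic component $A\otimes S_1$ --- are correct and parallel to Lemmas \ref{lemm:polystable}, \ref{lemm:Ej-equivariant} and \ref{lemm:r=1}. The problem is the step you yourself flag as the crux, $\dim A=1$: as justified, it fails. You invoke Steinberg's tensor product theorem to conclude that $A$ and $S_1$ are Frobenius twists, but Steinberg's theorem is a statement about \emph{irreducible} $G$-modules. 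The multiplicity space $A$ is a $G$-module with no reason to be semisimple, and, more seriously, the fibre $(S_1)_{x_0}$ is only a $P$-module, typically neither irreducible nor the restriction of a $G$-module: for homogeneous bundles the unipotent radical of $P$ acts nontrivially (already $T(G/P)$ corresponds to $\fg/\fp$, which is far from irreducible as a $P$-module). What you actually need is: a $P$-module all of whose $T$-weights lie in $p$ times the character lattice $X(T)$ is the Frobenius twist of a $P$-module, i.e.\ the Frobenius kernel $P_1$ acts trivially. That is a scheme-theoretic, infinitesimal condition on the action which weights do not control for free; the bijectivity-of-Frobenius-on-$k$-points trick, which legitimately untwists \emph{submodules} in your semistable half, cannot untwist the module structure itself, because $k$-points do not see $P_1$. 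Two smaller inaccuracies sit on top of this: matching $W^{[1]}$ with $A\otimes (S_1)_{x_0}$ only places the weights of $A$ and of $(S_1)_{x_0}$ in two opposite cosets modulo $pX(T)$, not in $pX(T)$ itself (the coset must be shown to be zero), and the claim that two weights differing by a (multiple of a) root cannot be congruent modulo $pX(T)$ is false for $G=SL_2$, $p=2$, which the working assumption allows.

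The missing descent statement is in fact provable under the working assumption, by showing the coset is zero and then analysing the divided-power expansion of each root-group action to see that it factors through Frobenius --- but that is precisely a representation-theoretic form of Cartier descent, and Cartier descent in its geometric form is what the paper uses instead, bypassing your whole decomposition. Namely, for any subbundle $W\subset F^*E$ with $\mu(W)=\mu(F^*E)$, the paper considers the canonical (Cartier) connection on $F^*E$ and shows that the induced $\O$-linear map $W \to F^*E\otimes \Omega^1_{G/P}$ modulo $W\otimes\Omega^1_{G/P}$ vanishes, by a slope computation that uses Frobenius semistability of $\Omega^1_{G/P}$ and the Ramanan--Ramanathan tensor product theorem; hence $W=F^*W'$ descends, and stability of $E$ forces $W'=E$. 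That route needs no polystable decomposition, no equivariant structure on $S_1$, and no representation theory of Frobenius kernels; yours, as written, is incomplete at exactly the point where all the difficulty is concentrated.
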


\begin{proof}
The absolute Frobenius morphism on $G/P$ will be denoted by $F$. First assume that $E$ is 
semi-stable. Let again $W$ be the first term of the Harder-Narasimhan filtration 
of $F^*E$. We use the correspondence between vector bundles on $G/P$ and 
$P$-modules. Thus $W$ corresponds to a $P$-stable subspace of $(F^*E)_{x_0}$, the fiber 
of $F^*E$ at the base point in $G/P$. This is the same as an $F^*P$-stable subspace $S$
of $E_{x_0}$. Since $F\,:\,P \,\longrightarrow\, P$ is bijective, this $S$ is also
a $P$-submodule of $E_{x_0}$. 
Thus, there exists a subbundle $W' \,\subset\, E$ 
of slope $\frac{\mu(W)}{p} \,\geq\, \frac{\mu(F^*E)}{p}\,=\,\mu(E)$
such that $W \,= \,F^* W'$. By 
semi-stability of $E$, we have $W'\,=\,E$. Thus we get that $W\,=\, F^*E$.
 
Assume now that $E$ is stable. So $F^*E$ is semistable.
Let $W \,\subset\, F^*E$ be a subbundle with $\mu(W)
\,=\, \mu(F^*E)$. We consider the Cartier connection $F^*E \,\longrightarrow\,
F^*E \otimes \Omega^1_{G/P}$. The subbundle $W$ is a Frobenius pull-back if and only
if its image 
under the composition $$W \,\longrightarrow\, F^*E
\,\longrightarrow\,F^*E \otimes \Omega^1_{G/P}$$ is contained in 
$W\otimes \Omega^1_{G/P}$. Since both $E$ and $\Omega^1_{G/P}$ are Frobenius
semistable, the tensor product $E \otimes \Omega^1_{G/P}$ is again 
semi-stable \cite[p. 285, Theorem 3.18]{RR}.
But $\mu(F^*E \otimes \Omega^1_{G/P}) < \mu(F^*E) \,=\, \mu(W)$, so 
this composition vanishes. Therefore, let $W' \subset E$ be such that $W \,= \,F^* W'$. 
We have $\mu(W') \,=\, \mu(E)$. By stability of $E$, we get that $W'\,=\,E$ and hence $W=F^\star E$.
\end{proof}

\section{An example in small characteristic}

We give an example of a tangent bundle which is semi-stable but not stable. We do not know
if there are some tangent bundles to homogeneous spaces which are not semi-stable.

The example is that of $X=G/P=\G_\omega(n,2n)$, the Grassmannian of Lagrangian spaces in
a symplectic space of dimension $2n$, and we assume that $k$ has characteristic $2$.
Namely, $G$ is $Sp_{2n}$ and $P$ corresponds to the long simple root.
Let $U$ denote the universal bundle on $X$, of rank $n$ and degree $-1$.
Then $TX$ is a subbundle of $U^* \otimes U^*$; in fact if $S^2U$ denotes
the symmetric quotient of $U \otimes U$, then $TX \simeq (S^2U)^*$.

We will implicitly use the correspondence between $P$-modules and $G$-linearized homogeneous bundles on $X$.
Note that the reductive quotient of $P$ is $GL(U)$. Since there is an injection $F^*U \to S^2U$ of $GL(U)$-modules
($F$ denotes the Frobenius morphism), this defines an exact sequence of bundles on $X$:
\begin{equation}
 0 \rightarrow F^*U \rightarrow S^2U \rightarrow K \rightarrow 0
\end{equation}
It follows that
there is a subbundle $K^* \subset TX$. Since $\mu(F^*U)=\mu(S^2U)=2\mu(U)$,
we get $\mu(K^*)=\mu(TX)$ and $TX$ is not
stable. However since $F^*U$ is the only
$GL(U)$-invariant subspace in $S^2U$, $K^*$ is the only equivariant subbundle
in $TX$. Thus the semi-stability
inequality holds for this subbundle. Arguing as in the proof of Lemma \ref{lemm:polystable},
we deduce that $TX$ is semi-stable.

\vskip 5mm

For general homogeneous spaces $G/P$, we face two difficulties:
\begin{itemize}
 \item There are equivariant subbundles in $TX$ which do not lift
to characteristic $0$, and contrary to the above example, they are numerous in general.

\item The stability of $TX$ for characteristic $0$ says nothing about $\mu(E)$ of such a 
subbundle $E \,\subset\, TX$. It is difficult to compute the $(\dim(G/P)-1)$-th power of 
the anticanonical polarization to be able to show the semi-stability inequality for $E$.
\end{itemize}

\section*{Acknowledgements}

We are very grateful to G. Ottaviani for pointing out an error in a previous version.
He also brought \cite{Bo} to our attention.
The second and third authors thank the Tata Institute of Fundamental Research, while
the first author thanks Institut de Math\'{e}matiques de Jussieu for hospitality
during various stages of this work.

\end{document}